\documentclass[a4paper, 12pt]{article}
\usepackage{amsmath,amsthm,amssymb}

\input xy
\xyoption{all}

\usepackage{color}
\usepackage{makeidx}
\usepackage{hyperref}
\makeindex 

\setlength{\oddsidemargin}{0pt}
\setlength{\topmargin}{5pt}
\setlength{\textheight}{650pt}
\setlength{\textwidth}{470pt}
\setlength{\headsep}{10pt}
\setlength{\parindent}{0pt}
\setlength{\parskip}{1ex plus 0.5ex minus 0.2ex}

\numberwithin{equation}{section}

{\theoremstyle{definition}\newtheorem{definition}{Definition}[section]

\newtheorem{remark}[definition]{Remark}

}
\newtheorem{proposition}[definition]{Proposition}
\newtheorem{proposition-definition}[definition]{Proposition-Definition}

\newtheorem{theorem}[definition]{Theorem}

\newcommand{\R}{\mathbb{R}}

\newcommand{\cF}{\mathcal{F}}
\newcommand{\cG}{\mathcal{G}}

\newcommand{\C}{\mathbb{C}}

\newcommand{\ev}{{\rm ev}}

\newcommand{\cA}{\mathcal{A}}
\newcommand{\cC}{\mathcal{C}}

\newcommand{\cP}{\mathcal{P}}
\newcommand{\cU}{\mathcal{U}}
\newcommand{\cH}{\mathcal{H}}

\newcommand{\cT}{\mathcal{T}}

\newcommand{\id}{{\hbox{id}}}
\newcommand{\ind}{{{\mathrm{ind}}}}
\newcommand{\ie}{{\it i.e.}\/ }
\newcommand{\eg}{{\it e.g.}\/ }
\newcommand{\cf}{{\it cf.}\/ }

\def\gpd{\,\lower1pt\hbox{$\longrightarrow$}\hskip-.24in\raise2pt
             \hbox{$\longrightarrow$}\,}

\begin{document}

\renewcommand\theenumi{\alph{enumi}}
\renewcommand\labelenumi{\rm {\theenumi})}
\renewcommand\theenumii{\roman{enumii}}

\begin{center}
{\Large\bf The analytic index of elliptic pseudodifferential operators on a singular foliation
\footnote{AMS subject classification: Primary 57R30, 46L87.
Secondary 46L65.}
\footnote{Research partially supported by DFG-Az:Me 3248/1-1.}


\bigskip

{\sc by Iakovos Androulidakis and Georges Skandalis}
}
\end{center}

{\footnotesize
 Georg-August Universit\"{a}t G\"{o}ttingen
\vskip -4pt Mathematisches Institut
\vskip -4pt Bunsenstrasse 3-5
\vskip -4pt D-37073 G\"{o}ttingen, Germany
\vskip -4pt e-mail: iakovos@uni-math.gwdg.de

\vskip 2pt Institut de Math{\'e}matiques de Jussieu, UMR 7586
\vskip -4ptCNRS - Universit\'e Diderot - Paris 7
\vskip-4pt 175, rue du Chevaleret, F--75013 Paris
\vskip-4pt e-mail: skandal@math.jussieu.fr
}
\bigskip
\everymath={\displaystyle}

\textbf{This draft: \today}

\begin{abstract}\noindent
In previous papers (\cite{AndrSk, AndrSkPsdo}), we defined the $C^*$-algebra and the longitudinal pseudodifferential calculus of any singular foliation $(M,\cF)$. Here we construct the analytic index of an elliptic operator as a $KK$-theory element, and prove that the same element can be obtained from an ``adiabatic foliation'' $\cT\cF$ on $M \times \R$, which we introduce here.
\end{abstract}

\section*{Introduction}

This article follows \cite{AndrSk} and \cite{AndrSkPsdo} regarding the study of singular foliations $(M,\cF)$. In these papers we constructed
\begin{itemize}
\item the holonomy groupoid $\cG(M,\cF)$, which is a topological groupoid endowed with a usually ill-behaved (quotient) topology;
\item the (full and reduced) $C^{*}$-algebra of the foliation $(M,\cF)$;
\item  the extension of $0$-order pseudodifferential operators: this is a short exact sequence $$0 \to C^{*}(M,\cF) \to {\Psi (M,\cF)} \stackrel{\sigma}{\longrightarrow} B \to 0 \eqno (\ref{zeroextn})$$ where $B$ is a quotient of the algebra of continuous functions on a cosphere ``bundle'' naturally associated with $\cF$.
\end{itemize}
The key to these constructions is the notion of a bi-submersion, which we are going to use here as well. In a broad sense this may be thought of as a cover of an open subset of the holonomy groupoid; it is given by a manifold $U$ and two submersions $s,t : U \to M$, each of which lifts the leaves of $\cF$ to the fibers of $s$ and $t$. 

\bigskip Here we study the analytic index of elliptic longitudinal pseudodifferential operators. This is a map $K_0(C_0(\cF^*))\to K_0(C^*(M,\cF))$ which can be directly expressed in terms of the extension of $0$-order pseudodifferential operators: indeed, the $K$-theory of $C_0(\cF^*)$ can be identified with the relative $K$-theory of the morphism $p:C(M)\to C(S\cF^*)$, and a natural commuting diagram gives rise to a map $K_0(\cF^*)=K_0(p)\to K_0(\sigma)=K_0(C^*(M,\cF))$ which is the analytic index. Moreover, using mapping cones, we construct this morphism as an element $\ind_a\in KK(C_0(\cF^*),C^{*}(M,\cF))$.

We then  prove that it can be obtained from a tangent groupoid, in the spirit of \cite{Connes} and \cite{Monthubert-Pierrot}:
\begin{itemize}
\item Every foliation $(M,\cF)$ gives rise to an ``adiabatic'' foliation $\cT\cF$ on $M \times \R$.
\item The holonomy groupoid of $\cT\cF$ is a ``deformation groupoid'', namely $$\cG_{\cT\cF} =  (\bigcup_{x \in M}\cF_{x}) \times \{0\} \cup \cG_{\cF} \times \R^{*}.$$
\item Restricting $C^*(M\times \R,\cT\cF)$ to the interval $[0,1]$, we find an extension $$0 \to C_{0}((0,1]) \otimes C^{*}(M,\cF) \to C^{*}(M \times [0,1],\cT\cF) \stackrel{\ev_{0}}{\longrightarrow} C_{0}(\cF^{*}) \to 0 \eqno (\ref{extn2})$$
\item The morphism $\ev_0$ has a contractible kernel; it is a $KK$-equivalence. We finally establish the equality $$\ind_a = [\ev_{0}]^{-1}\otimes [\ev_{1}].$$
\end{itemize}

\bigskip The material is structured as follows:
\begin{enumerate}
\item To make the paper self-contained, we start in section 1 with a brief overview of various definitions and results in \cite{AndrSk} and \cite{AndrSkPsdo}.

\item In section 2 we show how extension (\ref{zeroextn}) gives rise to the analytic index. This is obtained as the class  of the morphism of mapping cones $C_0(\cF^*)=\cC_p\to \cC_\sigma$ under the $KK$-equivalence associated with the ``excision map'' $e:C^*(M,\cF)=\ker \sigma\to \cC_\sigma$. Finally, we briefly explain \begin{enumerate}
\item how this construction relates to the element in $KK^1(C(S\cF^*),C^*(M,\cF))$ associated with extension (\ref{zeroextn});
\item  we show that the $K$-theory map $K_0(C_0(\cF^*))\to K_0(C^*(M,\cF))$ associated with $\ind_a$ is indeed the (analytic) index map - using relative $K$ theory.
\end{enumerate}

\item In section 3 we introduce the ``adiabatic foliation'' $(M \times \R,\cT\cF)$, whose leaves are $L\times \{\beta\}$ where $L$ is a leaf of $(M,\cF)$ and $\beta\in \R^*$ as well as points $\{(x,0)\}$ ($x\in M$) (\footnote{Recall however that in the case of singular foliations, the partition into leaves doesn't determine the foliation. The precise definition of $\cT\cF$ is given in section 3.}). We show that its bi-submersions are deformations to the normal cone of identity sections in bi-submersions of $(M,\cF)$. From this, it follows that the holonomy groupoid of $\cT\cF$ is the deformation groupoid of $\cG_{\cF}$ we mentioned before. Finally, we show that $C^{*}(M \times \R,\cT\cF)$ lies in a natural exact sequence as well as the extension discussed above.

\item In section 4, we examine the extension of $0$-order pseudodifferential operators on the adiabatic foliation. We deduce that the analytic index can be obtained from the adiabatic foliation.
\end{enumerate}

\paragraph{Conventions and notation}\label{section1.6}

\begin{enumerate}
\item In our notation we assume that the manifold $M$ is compact: we write $C(M)$, $C(S\cF^*)$, \textit{etc.}\/ and talk about the \emph{compactification} $\overline{\cF^*}=\cF^*\cup S\cF^*$ of $\cF$. On the other hand, everything remains true in the non compact case: one has just to work with algebras of continuous functions that vanish at infinity, and consider the closure of the set of pseudodifferential operators with compact support. This is already used in the case of the \emph{adiabatic foliation} whose base space is $M\times \R$ (although we right away restrict to $M\times [0,1]$).

\item \label{section1.6.b}We fix an atlas $\cU$ for our foliation $(M,\cF)$, which could as well be the smallest one - the \emph{path homotopy atlas} and write $C^*(M,\cF)$ instead of $C^*(\cU)$. Actually, this is not an important issue, since we have a natural morphism $C^*(M,\cF)\to C^*(\cU)$ for any atlas, and the index for $C^*(\cU)$ is just the push-forward by this morphism of  the index for $C^*(M,\cF)$.
\end{enumerate}

\tableofcontents

\section{Singular foliations and $C^*$-algebras}

We briefly recall here some facts and constructions from  \cite{AndrSk, AndrSkPsdo}.

\subsection{Foliations}

\begin{definition}\begin{enumerate}
\renewcommand{\theenumi}{\alph{enumi}}
\renewcommand{\labelenumi}{\theenumi)}

\item Let $M$ be a smooth manifold. A \emph{foliation} on $M$ is a
locally finitely generated submodule of $C_c^\infty(M;TM)$ stable
under Lie brackets.

\item For $x\in M$, put $I_{x} = \{ f \in C^{\infty}(M) : f(x)=0 \}$. The \textit{fiber} of $\cF$ is the quotient $\cF_{x} = \cF/I_{x}\cF$. The \textit{tangent space of the leaf} is the image $F_{x}$ of the evaluation map $\ev_{x} : \cF \to T_{x}M$.

\item The {\em cotangent ``bundle''} of the foliation $\cF$ is the union  $\cF^{*}=\coprod_{x\in M} \cF^*_x$. It has a natural projection $p:\cF^*\to M$ ($(x,\xi)\mapsto x$) and for each $X\in \cF$, there is a natural map $q_X:(x,\xi)\mapsto \xi\circ e_x(X)$. We endow $\cF^*$ with the weakest topology for which the maps $p$ and $q_X$ are continuous. This makes it a locally compact space (\cf \cite[\S 2.2]{AndrSkPsdo})

\end{enumerate}
\end{definition}

If
$(M,\cF)$ is a foliation and $f:M\times L\to M$ is the first
projection, every vector field $X$ of $M$ extends to a vector field $X \otimes 1$ on $M \times L$, which is tangent along $M$. We define the foliation $\cF\otimes 1$ to be the submodule of $C_c^\infty (M\times L;T(M\times L))$  which consists of finite sums $\sum f_i(X_i\otimes 1)$ where $f_i\in C_c^\infty (M\times L)$ and $X_i\in \cF$.
The pull back foliation $f^{-1}(\cF)$ is the space of
vector fields spanned by vertical vector fields ($\ker df$) and $\cF\otimes 1$. In this way, we define also the pull-back  foliation  by a submersion.

\subsection{Bi-submersions}

The key ingredient in our study of the holonomy of a foliation is the notion of a bi-submersion. This can be thought of as a piece of the holonomy groupoid. Explicitly:

\begin{definition}
A \emph{bi-submersion} of $(M,\cF)$ is a
smooth manifold $U$ endowed with two smooth maps $s,t:U\to M$  which are submersions and
satisfy:\begin{enumerate}\renewcommand{\theenumii}{\roman{enumii}}
\renewcommand{\labelenumii}{(\theenumii)}
\item $s^{-1}(\cF)=t^{-1}(\cF)$.
\item $s^{-1}(\cF)=C_c^\infty(U;\ker ds)+C_c^\infty (U;\ker dt)$.
\end{enumerate}
If  $(U,t,s)$ is a bi-submersion then the dimension of the manifold $U$ is at at least $\dim M + \dim \cF_{s(u)}$. We say it is \em{minimal} at $u \in U$ if $\dim U = \dim M + \dim \cF_{s(u)}$.
\end{definition}
If $(U,t_{U},s_{U})$ and $(V,t_{V},s_{V})$ are bi-submersions then $(U,s_{U},t_{U})$ is a bi-submersion - called the \emph{inverse bi-submersion} and noted $U^{-1}$, as well as $(W,s_{W},t_{W})$ where $W = U \times_{s_{U},t_{V}}V$, $s_{W}(u,v) = s_{V}(v)$ and $t_{W}(u,v) = t_{U}(u)$  - called the \emph{composition} of $U$ and $V$ and denoted $U\circ V$ (\cite[Prop. 2.4]{AndrSk}). 

\begin{definition}[morphisms of bi-submersions]  Let $(U_i,t_i,s_i)$ ($i=1,2$) be bi-submersions. A smooth map $f:U_1\to U_2$ is a \emph{morphism of bi-submersions} if $s_1=s_2\circ f$ and $t_1=t_2\circ f$.
\end{definition}

A notion which is very important for the pseudodifferential calculus is that of an identity bisection:

\begin{definition}  An \textit{identity bisection} of $(U,t,s)$ is a locally closed submanifold $V$ of $U$ such that the restrictions to $V$ of $s$ and $t$ coincide and are \'etale. 
\end{definition}

Also, for every bi-submersion $(U,t,s)$ and every $u\in U$, there exist  a neighborhood $W$ of $u$ in $U$, a bi-submersion $(U',t',s')$ and a submersion which is a morphism $f:(W,t_{|W},s_{|W})\to (U',t',s')$ such that $U'$ is minimal at $f(u)$.

\subsection{The groupoid of an atlas}

\begin{definition}
 Let $\cU = \big((U_{i},t_{i},s_{i})\big)_{i \in I}$ be a family of bi-submersions. A bi-submersion $(U,t,s)$ is \emph{adapted} to $\cU$ if for all $u \in U$ there exists an open subset $U' \subset U$ containing $u$, an $i \in I$, and a morphism of bi-submersions $U' \to U_{i}$. 

We say that $\cU$ is an \textit{atlas} if
\begin{enumerate}
\item $\bigcup_{i \in I}s_i(U_{i}) = M$.
\item The inverse of every element in $\cU$ is adapted to $\cU$.
\item The composition $U \circ V$ of any two elements in $\cU$ is adapted to $\cU$.
\end{enumerate}
An atlas $\cU' = \{(U'_{j},t'_{j},s'_{j})\}_{j \in J}$ is adapted to $\cU$ if every element of $\cU'$ is adapted to $\cU$. We say $\cU$ and $\cU'$ are \textit{equivalent} if they are adapted to each other. There is a \textit{minimal atlas} which is adapted to any other atlas: this is the atlas generated by ``identity bi-submersions''. 
\end{definition}

The groupoid $\cG(\cU)$ of an atlas $\cU = \big((U_{i},t_{i},s_{i})\big)_{i \in I}$ is the quotient of $U=\coprod_{i\in I}U_i$ by the equivalence relation for which $u\in U_i$ is equivalent to $u'\in U_j$ if there is a morphism of bi-submersions $f:W\to U_j$ defined in a neighborhood $W\subset U_i$ of $u$ such that $f(u)=u'$.

\subsection{The $C^*$-algebra of a foliation}

In \cite[\S 4]{AndrSk} we associated to an atlas $\cU$ its (full) $C^*$-algebra $C^*(\cU)$. To any bi-submersion $W$ adapted to $\cU$ we associate a map $Q_{W} : C^\infty_{c}(W;\Omega^{1/2}W) \to C^*(\cU)$, where  $\Omega^{1/2}W$ is the bundle of half densities on $\ker ds\times \ker dt$. The image $\bigoplus_{i \in I}Q_{U_i}(C^\infty_{c}(U_{i};\Omega^{1/2}U_{i}))$ is a dense $*$-subalgebra of $C^*(\cU)$.

When $\cU$ is the minimal atlas this algebra is denoted by $C^{*}(M,\cF)$.

In \cite[\S5]{AndrSk} it was shown that the representations of the full $C^{*}$-algebra of an atlas on a Hilbert space correspond to representations of the associated groupoid on a Hilbert bundle (desintegration theorem). We are going to use this correspondence in this sequel, so let us recall the explicit definition of these groupoid representations:

\begin{definition}
Let $\cU = \{(U_i,t_i,s_i)\}_{i \in I}$ an atlas of the foliation $(M,\cF)$. A representation of $\cG(\cU)$ is a triple $(\mu,H,\chi)$ where:
\begin{enumerate}
 \item $\mu$ is a quasi-invariant measure on $M$. Namely, for every $(U,t,s) \in \cU$ and positive Borel sections $\lambda^s$ of $\Omega^1(\ker ds)$ and $\lambda^t$ of $\Omega^1(\ker dt)$, the measures $\mu \circ \lambda^s$ and $\mu \circ \lambda^t$ are equivalent.
 \item $H = (H_x)_{x \in M}$ is a $\mu$-measurable field of Hilbert spaces over $M$.
 \item $\chi = \{\chi^U\}_{\cU}$ is a family of $\mu\circ\lambda$-measurable sections of unitaries $\chi^U_u : H_{s(u)} \to H_{t(u)}$. Moreover $\chi$ is a homomorphism defined in $\cG(\cU)$. That is to say: 
\begin{itemize}
\item if $f : U \to U'$ is a morphism of bi-submersions then $\chi_{f(u)}^{U'} = \chi_u^U$ for almost all $u \in U$, and 
\item $\chi^{U\circ V}_{(u,v)} = \chi^U_u \chi^V_v$ for almost all $(u,v) \in U \circ V$, for all bi-submersions $U,V$  adapted to $\cU$.
\end{itemize}
\end{enumerate}

\end{definition}


\subsection{The extension of pseudodifferential operators of order $0$}

In \cite{AndrSkPsdo}, we constructed the pseudodifferential calculus in the context of singular foliations. In fact, we will use here very little information on these operators,  just the exact sequence of order $0$ pseudodifferential calculus. 

This is an exact sequence of $C^{*}$-algebras
\begin{align}\label{zeroextn}
0 \to C^{*}(M,\cF) \to {\Psi(M,\cF)} \to B(M,\cF) \to 0
\end{align}
where ${\Psi(M,\cF)}$ is the $C^*$-algebra of the zero-order pseudodifferential operators and $B$ is the commutative $C^*$-algebra of symbols of order $0$. It is a quotient of $C(S\cF^{*})$ (the continuous functions on the cosphere ``bundle'').

\section{The analytic index}

The analytic index of elliptic pseudodifferential operators on a Lie groupoid $G$ over $M$ (\cf \cite{Monthubert-Pierrot}, \cite{Nistor-Weinstein-Xu}) is a group morphism defined on $K_{0}(C_{0}(A^{*}G))$, with values in $K_{0}(C^{*}(G))$.  It maps the class of the principal symbol of an elliptic pseudodifferential operator $P$, which is an element of $K_0(C_{0}(A^{*}G))$, to the index class of $P$, \ie an element of $K_{0}(C^{*}(G))$. 

The exact sequence (\ref{zeroextn}) of the zero-order  pseudodifferential calculus, allows us to extend this construction to the framework of singular foliations. This map comes from an element $\ind_a\in KK(C_0(\cF^*),C^*(M,\cF))$. 
Since we wish to identify precisely this element with the one obtained using the ``tangent groupoid'', we will give this construction explicitly, based on mapping cones and the identifications involved.

Let us also point out that the analytic index for smooth groupoids is sometimes presented as the connecting map associated with the exact sequence  (\ref{zeroextn}) (or the $KK^1$ element this exact sequence defines). Our presentation has two minor advantages:
\begin{itemize}
\item It is slightly more primitive since the element in $KK^1(C_0(S\cF^*),C^*(M,\cF))$ is in fact the composition of our $\ind_a$ with the $KK^1$ element corresponding with the obvious extension of $C_0(S\cF^*)$ by $C_0(\cF^*)$ (the one defined by the compactification $\cF^* \cup S\cF^*$);

\item Our element is slightly more tractable and has no sign problems since it only involves homomorphisms of $C^*$-algebras.
\end{itemize}

\subsection{Mapping cones}

We briefly recall here some facts about mapping cones and their use in $KK$-theory (\cf \cite{Kasc, Cuntz-Skandalis}). Let $\varphi : A \to B$ a homomorphism of unital $C^{*}$-algebras. 

\begin{itemize}
\item The mapping cone of $\varphi$ is the $C^{*}$-algebra $\cC_{\varphi} = \{(f,a) \in C_{0}([0,1);B) \times A;\ \varphi(a) = f(0)\}$.

\item The cone of the $C^{*}$-algebra $B$ is the contractible $C^{*}$-algebra $\cC_{B} =\cC_{\id_B}= C_{0}([0,1);B)$. If $\varphi : A \to B$ is onto, we have the exact sequence $0 \to \ker\varphi \ {\buildrel e\over {\longrightarrow}}\ \cC_{\varphi} \to \cC_{B} \to 0$, where $e(x)=(0,x)$ for $x\in \ker \varphi$. The 6-term exact sequence gives $K_{0}(\cC_\varphi) = K_{0}(\ker\varphi)$. If  $\varphi$ admits a completely positive splitting, the element $[e]\in KK(\ker \varphi,\cC_\varphi)$ is invertible.

\item The ``cone'' construction is natural, namely a commutative diagram of $C^{*}$-algebra homomorphisms
$$
\xymatrix{
A \ar[d] \ar[r]^{\varphi} & B \ar[d] \\ 
A' \ar[r]^{\varphi'} & B'
}
$$
gives rise to a $*$-homomorphism $\cC_{\varphi} \to \cC_{\varphi'}$.

\item Let $\varphi : Y \to Z$ be a proper map between locally compact spaces. The mapping cone of $\varphi$ is $$C_{\varphi} = Y \times [0,1)\cup Z /_\sim$$ where the equivalence relation is  $(y,0) \sim \varphi(y)$ for all $y \in Y$. Abusing the notation, we write $\varphi : C_0(Z) \to C_0(Y)$ for the induced map. The cone of this $\varphi$ is the algebra of continuous functions on the mapping cone, \ie $ \cC_{\varphi}=C_0(C_{\varphi}) $.
\end{itemize}

\subsection{Analytic index}

Let us now come to the case of a (singular) foliation. Locally, $\cF^*$ is a closed subspace of (the total space of) a vector bundle. We may choose a metric on this bundle; this will fix continuously a euclidian metric on each $\cF^*_x$. This can even be done globally using partitions of the identity. Let then $S\cF^{*}$ be the sphere ``bundle'' of $\cF^{*}$, \ie the space of half lines in $\cF^*$ identified with the space of vectors of norm $1$ in $\cF^*$.

We obviously have:

\begin{proposition}\label{cone}
Let $(M,\cF)$ be a foliation. The cone of the projection $p : S\cF^{*} \to M$ is canonically isomorphic with $\cF^*$. \hfill$\square$
\end{proposition}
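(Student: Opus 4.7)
The plan is to construct an explicit homeomorphism $C_p \to \cF^*$ by polar decomposition in each fiber, using the fiberwise Euclidean metric on $\cF^*$ that has been fixed just before the statement.

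Fix a homeomorphism $\varphi : [0,1) \to [0,\infty)$ (e.g.\ $\varphi(r) = r/(1-r)$) and define $\Phi : (S\cF^* \times [0,1)) \sqcup M \to \cF^*$ by $\Phi(y,r) = \varphi(r)\cdot y$ on the first piece (scalar multiplication inside $\cF^*_{p(y)}$) and $\Phi(x) = 0 \in \cF^*_x$ on $M$. The two definitions agree on the gluing locus, since $\Phi(y,0) = 0 \in \cF^*_{p(y)} = \Phi(p(y))$, so $\Phi$ factors through a map $\overline{\Phi}: C_p \to \cF^*$. Bijectivity is immediate from the polar decomposition of a metrized vector space: each nonzero $\xi \in \cF^*_x$ writes uniquely as $\|\xi\|\cdot (\xi/\|\xi\|)$ with unit vector $\xi/\|\xi\|$, while the zero vectors correspond exactly to the image of $M$.

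For continuity of $\overline{\Phi}$, I would use that the topology on $\cF^*$ is, by definition, the coarsest for which $p$ and the maps $q_X$ ($X \in \cF$) are continuous; thus it suffices to check that $p\circ\overline{\Phi}$ and $q_X\circ\overline{\Phi}$ are continuous. On $S\cF^*\times[0,1)$ these read $(y,r)\mapsto p(y)$ and $(y,r)\mapsto \varphi(r)\,q_X(y)$, both obviously continuous; on $M$ they restrict to the identity and to $0$ respectively, which matches the limit $r\to 0$. Continuity of the inverse $\overline{\Phi}^{-1}$ reduces to continuity of $\xi \mapsto \|\xi\|$ on $\cF^*$ (which follows from the chosen metric being continuous on the locally ambient vector bundle) and of the rescaling $\xi \mapsto \xi/\|\xi\|$ on $\cF^*\setminus M$; the latter is continuous into $\cF^*$ because $p$ and each $q_X$ pull back to continuous functions there (since $\|\xi\|>0$).

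There is no real obstacle: the statement is essentially a reformulation of polar coordinates. The one point that requires attention is the fact that, in the singular setting, $\cF^*$ is not a genuine vector bundle and carries an a priori weak topology; but since every continuity check factors through the maps $p$ and $q_X$, this causes no difficulty.
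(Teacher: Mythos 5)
Your proposal is correct in substance and, since the paper gives no proof at all (the proposition is stated with a terminal $\square$ as ``obvious''), your polar-coordinate construction is exactly what the authors have in mind; it supplies the missing detail. The map $\Phi(y,r)=\varphi(r)\cdot y$, the compatibility on the gluing locus, the bijectivity via polar decomposition, and the verification that $p\circ\overline{\Phi}$ and $q_X\circ\overline{\Phi}$ are continuous (using the defining weak topology on $\cF^*$) are all right.

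The one place where your argument is not quite complete is continuity of the inverse \emph{at the zero covectors}. You address continuity of $\xi\mapsto\|\xi\|$ on all of $\cF^*$ and of $\xi\mapsto \xi/\|\xi\|$ on $\cF^*\setminus M$, and this does give continuity of $\overline{\Phi}^{-1}$ on the open set $\cF^*\setminus M$, where its image lies in the openly embedded piece $S\cF^*\times(0,1)$ of $C_p$. But at a point $0_x\in\cF^*_x$ the inverse maps to the cone vertex $[x]$, and $C_p$ carries the quotient topology; there is no supply of separating continuous functions analogous to $p$ and $q_X$, so you cannot dispose of this case by the same factoring trick. To finish you need either a tube-lemma argument near $S\cF^*_x\times\{0\}$ (using compactness of the fiber sphere), or — cleaner, and the route I would recommend — observe that $\overline{\Phi}$ is a continuous bijection of locally compact Hausdorff spaces which is \emph{proper} (the preimage of $\{\|\xi\|\le R\}$ is $S\cF^*\times[0,\varphi^{-1}(R)]\cup M$, which is compact for $M$ compact), hence a homeomorphism. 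With that addition the proof is complete.
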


Every function $f \in C(M)$ can be considered as the zero-order longitudinal (pseudo)differential operator $m(f)$ which acts by multiplication on the algebra $\cA_{\cU}$. Its principal symbol is constant on covectors $\sigma(m(f))(x,\xi)=f(x)$ for $x\in M$ and $\xi \in \cF^*_x$. In other words, we have a commutative diagram
\begin{align}\label{diag2}
\xymatrix{
\ & C(M) \ar[r]^{p} \ar[d]_{m} & C(S\cF^{*}) \ar[d]^{q} & & \  \\ 
0 \to C^{*}(M,\cF) \ar[r] & \Psi(M,\cF)  \ar[r]^{\sigma} & B(M,\cF) \ar[r] & 0
}
\end{align}

Denote by $\varphi : C_0(\cF^*)\simeq \cC_p\to \cC_\sigma$ the homomorphism  induced by the commuting square
$$
\xymatrix{
C(M) \ar[r]^{p} \ar[d]_{m} & C(S\cF^{*}) \ar[d]^{q} \\ 
{\Psi(M,\cF)}  \ar[r]^{\sigma} & B(M,\cF)
}
$$

Denote by $e:C^{*}(M,\cF)\to \cC_\sigma$ the ``excision'' map associated with the exact sequence (\ref{zeroextn}). It is a $KK$-equivalence since $B(M,\cF)$ is abelian, whence  $\sigma $ admits a completely positive cross-section.

\begin{definition}
The analytic index is the element $$\ind_a=[\varphi]\otimes_{\cC_\sigma}[e]^{-1}\in KK(C_0(\cF^*),C^*(M,\cF)).$$
\end{definition}

\subsection{The element of $KK^1$ associated with the extension  (\ref{zeroextn})}

Let $0\to J\to A\ {\buildrel p\over \longrightarrow}\  A/J\to 0$ be an exact sequence of $C^*$-algebras. Assume that the morphism $p$ admits a completely positive section. Consider the morphisms $e:J\to \cC_p$ and $j:C_0((0,1);A/J)\to \cC_p$ given by $e(x)=(0,x)$ and $j(f)=(f,0)$. Recall that the element of $KK_1(A/J,J)=KK(C_0((0,1);A/J),J)$ associated with the exact sequence above is the composition $[j]\otimes_{\cC_p}[e]^{-1}$.

It follows that the element of $KK^1(C_0(S\cF^*),C^*(M,\cF))$ associated with the exact sequence (\ref{zeroextn}) is just $j^*(\ind_a)$ where $j:C_0(S\cF^*\times (0,1))\simeq C_0(\cF^*\setminus M)\to C_0(\cF^*)$ is the inclusion.

In this way, the element $\ind_a$ we just constructed is more primitive than the $KK^1$ element associated with extension  (\ref{zeroextn}).

\subsection{The $K$-theory map associated with $\ind_a$}

We now show that the $K$-theory map $K_0(C_0(\cF^*))\to K_0(C^*(M,\cF))$ associated with $\ind_a$ is indeed the index. 

Another way of looking at the $K$-theory of a cone is relative $K$-theory. 

Let $\varphi : A \to B$ be a homomorphism of unital $C^{*}$-algebras. 

Recall that the group $K_{0}(\varphi)$ is given by generators and relations: \begin{itemize}
\item Its generators are triples $(e^{+},e^{-},u)$ where $e^{+}, e^{-} \in M_{n}(A)$ are idempotents and $u \in M_{n}(B)$ is such that $uv = \varphi(e^{+})$ and $vu = \varphi(e^{-})$ for some $v \in M_{n}(B)$.  
\item Addition is given by direct sums.

\item Trivial elements are  those triples $(e^{+},e^{-},u)$ for which $u = \varphi(u_{0})$ and $v = \varphi(v_{0})$ for some $u_{0}, v_{0} \in M_{n}(A)$ satisfying $u_{0}v_{0} =e^+$ and $v_{0}u_{0}=e^-$.

\item The group $K_0(\varphi)$ is formed as the set of those triples divided by trivial triples, and homotopy - which is given by triples associated with the map $C([0,1];A)\to C([0,1];B)$ associated with $\varphi$.

\item For non unital algebras / morphisms, we just put $K_{0}(\varphi)=K_{0}(\tilde \varphi)$, where $\tilde \varphi:\tilde A\to \tilde B$ is obtained by adjoining units everywhere.
\end{itemize}

Note that $K_0(A)=K_0(A\to 0)=K_0(\epsilon _A)$ where $\epsilon _A:\tilde A\to \C$ is the morphism with kernel $A$.

If $\varphi$ is onto, then $K_0(\varphi)=K_0(\ker \varphi)$. More precisely, the map $K_0(\epsilon_{\ker \varphi})\to K_0(\varphi)$ induced by the commuting diagram$$
\xymatrix{
\widetilde{\ker \varphi} \ar[d] \ar[r]^{\epsilon_{\ker \varphi}} & \C \ar[d] \\ 
\tilde A \ar[r]^{\varphi} & \tilde B
}
$$
is an isomorphism. The inverse of this isomorphism is the \emph{index map} of the exact sequence.

Finally, there is a natural isomorphism $K_0(\varphi)\to K_0(\cC_\varphi)$ which in case $\varphi$ is onto, is compatible with the identifications of $K_0(\varphi)$ and of $K_0(\cC_\varphi)$ with $K_0(\ker \varphi)$

A symbol of an elliptic operator of order $0$ is given by two bundles $E_\pm$ and an isomorphism $a$ of the pull-back bundles $p^*(E_\pm)$, where $p:S^*\cF\to M$ is the projection. It therefore defines an element of $K_0(p)\simeq K_0(C_0(\cF^*))$. A diagram chasing shows that its image by $\ind_a$ is indeed the index class of a pseudodifferential operator with symbol $a$.

\section{The tangent groupoid}


We now construct the \emph{adiabatic foliation} associated to a foliation $\cF$. This is going to be a foliation on $M\times \R$. The holonomy groupoid of this foliation is $\bigcup_{x\in M}\cF_x\times \{0\}\cup \cG\times \R^*$. It is called the \emph{tangent groupoid.} Its $C^*$-algebra contains as an ideal $C_0(\R^*)\otimes
C^*(M;\cF)$ with quotient $C_0(\cF^*)$.

This tangent groupoid allows to construct an element in $KK(C_0(\cF^*),C^*(M;\cF))$. As in the case of \cite{Monthubert-Pierrot}, we will show that this element coincides with the analytic index.

\subsection{The ``adiabatic foliation''}

\bigskip Let $\lambda :M\times \R\to\R$ be the second projection and $J=\lambda C_c^\infty(M\times \R)$ the set of smooth compactly supported functions on $M\times \R$ which vanish on $M\times \{0\}$. Every vector field $X$ of $M$ extends to a vector field in $M \times \R$ tangent along $M$, which we will denote $X\otimes 1$. 

We define $\cT \cF$ to be the submodule of $C_c^\infty (M\times \R;TM\times \R)$ generated by $\lambda(\cF\otimes 1)$: it is the set of finite sums $\sum f_i(X_i\otimes 1)$ where $f_i\in J$ and $X_i\in \cF$.

\begin{proposition}
$\cT \cF$ is a foliation on $M\times \R$.
\end{proposition}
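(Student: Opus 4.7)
The plan is to verify the two defining properties of a foliation from the definition in Section 1: namely that $\cT\cF$ is locally finitely generated as a submodule of $C_c^\infty(M\times\R;T(M\times\R))$, and that it is stable under Lie brackets.

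For local finite generation I would argue as follows. Fix $(x_0,\beta_0)\in M\times\R$ and pick a neighborhood $U$ of $x_0$ on which $\cF$ is generated by finitely many vector fields $X_1,\dots,X_n\in\cF$. I claim $\lambda(X_1\otimes 1),\dots,\lambda(X_n\otimes 1)$ generate $\cT\cF$ on $U\times\R$. Indeed, any element of $\cT\cF$ can be written $\sum_i f_i(Y_i\otimes 1)$ with $f_i=\lambda\tilde f_i$, $\tilde f_i\in C_c^\infty(M\times\R)$, and $Y_i\in\cF$; locally one decomposes $Y_i=\sum_j h_{ij}X_j$ with $h_{ij}\in C^\infty(U)$, and then $\sum_i f_i(Y_i\otimes 1)=\sum_j \bigl(\sum_i\tilde f_i h_{ij}\bigr)\lambda(X_j\otimes 1)$. (Near $\beta_0\neq 0$ the factor $\lambda$ is invertible and the argument is slightly redundant, but the same generators work uniformly.)

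The core of the proof is bracket closure. Let $f,g\in J$ and $X,Y\in\cF$. Since $X\otimes 1$ and $Y\otimes 1$ are tangent to the fibres of $\lambda:M\times\R\to\R$, the function $\lambda$ is annihilated by them, so the usual Leibniz identity gives
\begin{align*}
[f(X\otimes 1),\,g(Y\otimes 1)] \;=\; fg\,([X,Y]\otimes 1) \,+\, f\bigl((X\otimes 1)(g)\bigr)(Y\otimes 1) \,-\, g\bigl((Y\otimes 1)(f)\bigr)(X\otimes 1).
\end{align*}
Three elementary observations finish the job: (i) $J$ is an ideal in $C_c^\infty(M\times\R)$, so $fg\in J$; (ii) $[X,Y]\in\cF$ because $\cF$ is itself a foliation; (iii) a function of the form $h\in J$ vanishes on $M\times\{0\}$, and $X\otimes 1$ is tangent to $M\times\{0\}$, so $(X\otimes 1)(g)$ again vanishes on $M\times\{0\}$ and is compactly supported, hence lies in $J$, and likewise for $(Y\otimes 1)(f)$. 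Multiplying by $f$, respectively $g$, keeps the coefficient in $J$. Thus each of the three summands is of the form (element of $J$)$\cdot$(lifted element of $\cF$), hence lies in $\cT\cF$.

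The only potentially delicate point is the bookkeeping around the ideal $J$: one has to remember both that $J$ is closed under multiplication by arbitrary elements of $C_c^\infty(M\times\R)$ and that $X\otimes 1$ preserves the property of vanishing on $M\times\{0\}$. Once these are in hand, the proof reduces to the single Leibniz computation above.
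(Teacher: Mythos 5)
Your proof is correct and takes essentially the same route as the paper's: you verify local finite generation using the same generators $\lambda(X_i\otimes 1)$, and you verify bracket closure via the same Leibniz expansion $[f(X\otimes 1),g(Y\otimes 1)]=fg([X,Y]\otimes 1)+f((X\otimes 1)(g))(Y\otimes 1)-g((Y\otimes 1)(f))(X\otimes 1)$. The paper states the Leibniz identity and concludes membership in $\cT\cF$ without comment, whereas your observations (i)--(iii) make explicit why each coefficient lands back in $J$; this is a welcome expansion, not a divergence.
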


\begin{proof}
\begin{itemize}
\item Let $U$ be an open subset  of $M$ over which $\cF$ is generated by vector fields $X_1,\ldots ,X_k$. On $U\times \R$, $\cT \cF$ is generated by the vector fields $\lambda(X_1\otimes 1),\ldots ,\lambda(X_k\otimes 1)$. It follows that $\cT \cF$ is locally finitely generated.
\item If $f,g\in J$ and $X,Y\in \cF$, we find $$[f(X\otimes 1),g(Y\otimes 1)]=f(X\otimes 1)(g)(Y\otimes 1)-g(Y\otimes 1)(f)(X\otimes 1)+fg([X,Y]\otimes 1)\in \cT \cF.$$ It follows that $\cT \cF$ is integrable.\qedhere
\end{itemize}
\end{proof}

\begin{definition}
The foliation $(M\times \R,\cT \cF)$ is called the \emph{adiabatic foliation} associated with $\cF$.
\end{definition}

\begin{remark} Recall (\cite[Def. 1.2]{AndrSk}) that associated with a foliation $(M_0,\cF_0)$ are two families of vector spaces indexed by $M_0$: the space tangent to the leaf and the fiber of the module $\cF_0$.

The tangent subspace to the leaves at a point $(x,\beta)$ is $F_x\times \{0\}$ for $\beta\in \R^*$ and $\{(0,0)\}$ if $\beta=0$. On the other hand, the module $\cT\cF$ is isomorphic (via multiplication by $\lambda$) to the module $\cF\otimes 1$. In particular, these modules have the same fibers. It thus follows that $\cT\cF_{(x,\beta)}\simeq \cF_x$ for all $\beta\in \R$.

Also the total space of the cotangent ``bundle'' is $\cT\cF^*=\cF^*\times\R$.
\end{remark}

\subsection{The holonomy groupoid of the adiabatic foliation}

In order to describe a natural family of bi-submersions associated with $\cT\cF$, we will use the classical construction of \emph{deformation to the normal cone.} A complete account of this construction can be found \eg in \cite{Carrillo}. We just recall here a few facts about this construction:

\begin{itemize}
\item Let $U$ be a smooth manifold and $V$ a (locally closed) smooth submanifold of $U$. The \emph{deformation to the normal cone} of $U$ along $V$ is a smooth manifold $D(U,V)$ which as a set is $U\times \R^*\cup N\times \{0\}$ where $N$ is the (total space of the) normal bundle of the inclusion $V\subset U$ (\ie $N_x=T_xU/T_xV$ for $x\in V$). 

\item This construction is functorial (\cf \cite[3.4]{Carrillo}). Namely, if $(U,V)$ and $(U',V')$ are pairs of a manifold and a submanifold, then a smooth map $p : (U,V) \to (U',V')$ such that $p(V) \subset V'$ induces a (unique) smooth map $\tilde{p} : D(U,V) \to D(U',V')$ defined by $\tilde{p} = (p,\id)$ on $U \times \R^{*}$ and $\tilde{p}(x,n,0) = (p(x),d_{N}(p_{x}),0)$ for every $(n,0) \in N \times \{0\}$. Here $d_{N}p_{x}$ is by definition the map $(N)_{x} \to (N')_{p(x)}$ induced by $p$.

The map $\tilde p :D(U,V) \to D(U',V')$ is a submersion if and only if the map $p:U\to U'$ and its restriction $p_V:V\to V'$ are submersions.

\item Let us already notice that there is a smooth map $q:D(U,V)\to U\times \R\,(=D(U,U))$ which is the identity (and a diffeomorphism) on $U\times \R^*$ and such that $q(y,0)=(p(y),0)$ for $y\in N$ where $p : N\to V$ is the bundle projection.
\end{itemize}

\begin{proposition}\label{bisubmD}
Let $(U,t,s)$ be a bi-submersion for $\cF$ and $V\subset U$ be a closed identity bisection. \begin{enumerate}
\item Then $(D(U,V),t\circ q,s\circ q)$ is a bi-submersion for the adiabatic foliation $(M\times \R,\cT \cF)$.

\item  If $(U',t',s')$ is a bi-submersion adapted to $(U,t,s)$ and $V'\subset U'$ is any closed identity bisection of $U'$ such that $s'(V')\subset s(V)$, then $D(U',V')$ is adapted to $D(U,V)$.
\end{enumerate}

\end{proposition}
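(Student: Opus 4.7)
The plan is to verify directly the two axioms of a bi-submersion for $(D(U,V),\tilde t,\tilde s)$, where $\tilde s = s\circ q$ and $\tilde t = t\circ q$. First, $\tilde s$ and $\tilde t$ are submersions $D(U,V)\to D(M,M)=M\times\R$: this follows from the functoriality recalled above, since $s,t:U\to M$ are submersions and their restrictions $s|_V,t|_V$ are \'etale, hence submersions.

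The main content of (a) is the module identity
$$\tilde s^{-1}(\cT\cF) = C_c^\infty(D(U,V);\ker d\tilde s) + C_c^\infty(D(U,V);\ker d\tilde t),$$
and its analogue for $\tilde t^{-1}(\cT\cF)$. I would argue locally in coordinates $(\alpha,\gamma)$ on $U$ chosen so that $V=\{\gamma=0\}$ and $s(\alpha,\gamma)=\alpha$, with corresponding chart $(\alpha,n,\beta)$ on $D(U,V)$ satisfying $\gamma=\beta n$ on the overlap with $U\times\R^*$. For the inclusion $\subseteq$, a generator $\lambda(X\otimes 1)$ of $\cT\cF$ is lifted by picking $Y\in s^{-1}(\cF)$ with $s_*Y=X$, decomposing $Y=Y_s+Y_t$ with $Y_s\in\ker ds,\ Y_t\in\ker dt$ via the bi-submersion hypothesis on $(U,t,s)$, and noting that $\lambda(Y\otimes 1)=\beta(Y_s+Y_t)\otimes 1$ extends smoothly across $N\times\{0\}$ (the rewriting $\beta\partial_\gamma=\partial_n$ absorbs the vanishing factor); each summand then lies in $\ker d\tilde s$ respectively $\ker d\tilde t$. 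For the reverse inclusion, I would show that $\ker d\tilde t$ is locally generated as a $C^\infty(D(U,V))$-module by the extensions $\widetilde{\lambda Y_{t,i}\otimes 1}$ where $Y_{t,1},\ldots,Y_{t,r}$ is a local frame of $\ker dt$: at $\beta\neq 0$ these coincide with $\beta Y_{t,i}$ which span $\ker dt$, and at $\beta=0$ their values span $N=\ker d\tilde t|_{N\times\{0\}}$ because the projection $\ker dt|_V\to\ker ds|_V\simeq N$ along $TV$ is an isomorphism (both subspaces being complements of $TV$ in $TU|_V$, by \'etaleness of $s|_V$ and $t|_V$). Each such generator is a lift under $\tilde s$ of $\lambda(s_*Y_{t,i}\otimes 1)\in\cT\cF$, so it lies in $\tilde s^{-1}(\cT\cF)$; symmetrically for $\ker d\tilde s$. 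The equality with $\tilde t^{-1}(\cT\cF)$ is automatic since the right-hand side is symmetric in $s,t$.

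For part (b), the plan is a second application of functoriality of $D$. Given $u'_0\in U'$, adapted-ness supplies a neighborhood $W'$ of $u'_0$ and a morphism of bi-submersions $f:W'\to U$. To feed functoriality I need $f(V'\cap W')\subset V$, which is not automatic from $s'(V')\subset s(V)$ alone; however, since both $s|_V$ and $s'|_{V'}$ are \'etale, there is a unique $v_0\in V$ with $s(v_0)=s'(u'_0)$, and after shrinking $W'$ and possibly post-composing $f$ with a local self-morphism of $U$ carrying $f(u'_0)$ to $v_0$ along the identity bisection (available because local holonomy at units is trivial), I may assume $f(V'\cap W')\subset V$. Functoriality then produces a smooth map $\tilde f:D(W',V'\cap W')\to D(U,V)$, which is a submersion since $f$ and $f|_{V'\cap W'}$ are; the relations $\tilde s\circ\tilde f=\widetilde{s'}$ and $\tilde t\circ\tilde f=\widetilde{t'}$ on the domain make $\tilde f$ a morphism of bi-submersions, exhibiting $D(U',V')$ as adapted to $D(U,V)$ near $u'_0$.

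I expect the main obstacle to be the adjustment of $f$ in (b), namely the modification required to make it preserve the identity bisections; this hinges on a local triviality of holonomy near units that should already be available in \cite{AndrSk}. The rest of the argument reduces to controlled coordinate bookkeeping made clean by the functoriality of the deformation to the normal cone; the only real subtlety in (a) is that smooth sections of $\ker d\tilde t$ near the zero fiber are built from elements of $\ker dt$ scaled by $\lambda=\beta$ rather than directly from $\ker dt$, so one must keep track of the $\beta$-factor when extending across $N\times\{0\}$.
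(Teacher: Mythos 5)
Your proposal is correct and follows essentially the same route as the paper's proof: for (a), the paper likewise reduces to a local model near $V$ (with $V$ open in $M$ and $U\subset V\times\R^k$), identifies $D(U,V)$ with $\{(v,\alpha,\beta):(v,\beta\alpha)\in U\}$, computes that $\ker d\tilde t$ is spanned by the smooth extensions of the $\beta$-scaled frame of $\ker dt$, and checks that their $M$-components generate $\lambda\cF$; for (b) it likewise applies functoriality of $D$ to a local morphism with $f(V')\subset V$. The step you flag as the main obstacle — arranging $f(V'\cap W')\subset V$ — is exactly what the paper supplies by citing \cite[Prop.~2.5]{AndrSkPsdo}; note that merely post-composing $f$ with a self-morphism of $U$ carrying the single point $f(u'_0)$ to $v_0\in V$ does not by itself force the whole bisection to land in $V$, so your instinct to invoke a lemma from the earlier papers rather than argue it by hand is the right one.
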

\begin{proof}
\begin{enumerate}
\item The maps $s\circ q$ and $t\circ q$ are the maps $\tilde{s}, \tilde{t}$ from $D(U,V)$ to $D(M,M)=M\times \R$ associated with the smooth submersions $s$ and $t$, whose restrictions to $V$ are \'etale. It follows that they are smooth submersions.

The assertion is local: we may restrict to a small open neighborhood of a given point $u\in U$. The restriction to the open set $U\setminus V$ is easy: we are dealing with the maps $s\times \id_{\R^*},t\times \id_{\R^*}:(U\setminus V)\times \R^*\to M\times \R^*$ which is easily seen to be a bi-submersion for the product foliation $\cF\otimes 1$. Now, the restrictions to the open set on $M\times \R^*\subset M\times \R$ of $\cT\cF$ and $\cF\otimes 1$ coincide.

Take now an open neighborhood of a point $v\in V$. We may therefore assume that $V$ is an open subset in $M$, $U$ is an open subset in $V\times \R^k$, and $s(v,\alpha)=v$. Restricting to an even smaller neighborhood of $v$, we may further assume that $t(U)\subset V$ and that $t$ has also a product decomposition. Therefore, $\ker dt$ is spanned by vector fields $(Y_1,\ldots,Y_k)$. Decompose each of these vector fields as $Y_i=(Z_i,Z'_i)$, where $Z_i$ is tangent along $V$ and $Z'_i$ is tangent along $\R^k$. The fact that $U$ is a bi-submersion means exactly that the $Z_i$ generate the foliation $\cF\otimes 1$ on $U$.

Now $D(U,V)$ identifies with $\{(v,\alpha,\beta)\in V\times \R^k\times \R;\ (v,\beta\alpha)\in U\}$; under this identification, $\tilde s(v,\alpha,\beta)=(v,\beta)$ and $\tilde t(v,\alpha,\beta)=(t(v,\beta\alpha),\beta)$. It follows that $d\tilde t_{(v,\alpha,\beta)}(Z,Z',0)=(dt_{(v,\beta\alpha)}(Z,\beta Z'),0)$ (for $Z$ tangent along $V$ and $Z'$ along $\R^k$), whence $\ker d\tilde t$ is spanned by $(\tilde Y_1,\ldots,\tilde Y_k)$ where $\tilde Y_i=(\tilde Z_i,\tilde Z'_i,0)$, the vector fields $\tilde Z_i$ and $\tilde Z'_i$ being defined by $\tilde Z_i(v,\alpha,\beta)=\beta Z_i(v,\alpha\beta)$ and $\tilde Z'_i(v,\alpha,\beta)=Z'_i(v,\alpha\beta)$. It follows that $\ker d\tilde s\oplus \ker d\tilde t$ is the set of vector fields $(Z,Z',0)$ where $Z'$ is any section of $\R^k$ (this is $\ker d\tilde s$) and $Z$ is in the module spanned by $\tilde Z_i$.

This proves that $\tilde s^{-1}(\cF)=\ker d\tilde s\oplus \ker d\tilde t$. Exchanging the roles of $s$ and $t$, we get the equality $\tilde t^{-1}(\cF)=\ker d\tilde s\oplus \ker d\tilde t$.

\item We may again consider two cases: the case where $V'$ is empty and the case where we deal with a small neighborhood of $v'\in V'$. It follows from \cite[Prop. 2.5]{AndrSkPsdo}, that there is a (local) morphism of bi-submersions mapping $V'$ to $V$. In both cases, we may assume that we have a morphism of bi-submersions $f:(U',t',s')\to (U,t,s)$ such that $f(V')\subset V$. By functoriality of the deformation to the normal cone, we obtain a smooth map $\tilde f:(\tilde U',\tilde t',\tilde s')\to (\tilde U,\tilde t,\tilde s)$ which is a morphism of bi-submersions. \qedhere
\end{enumerate}
\end{proof}

\begin{proposition} \label{propatlas}
Let $\cU=(U_i,t_i,s_i)_{i\in I}$ be an atlas for $(M,\cF)$ and let $V_i\subset U_i$ be identity bi-sections {\rm (\footnote{Some of the $V_i$'s may be empty})}. Assume that $\bigcup _{i\in I}s_i(V_i)=M$. \begin{enumerate}
\item Then $\widetilde\cU=(D(U_i,V_i),t_i\circ q_i,s_i\circ q_i)_{i\in I}$ is an atlas for $(M\times \R,\cT \cF)$. 
\item If moreover $\cU$ is the path holonomy atlas for $(M,\cF)$, then $\widetilde\cU$ is the path holonomy atlas for $(M\times \R,\widetilde\cF)$.
\end{enumerate}
\end{proposition}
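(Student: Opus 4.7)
The plan is to verify the three atlas axioms for $\widetilde\cU$ to prove (1), relying repeatedly on Proposition \ref{bisubmD} and the functoriality of the deformation to the normal cone, and then to carry out a direct local-coordinate computation for (2) comparing $D(\Omega,V\times\{0\})$ with a path-holonomy bi-submersion for $\cT\cF$. Each $D(U_i,V_i)$ is already a bi-submersion by Proposition \ref{bisubmD}(1), so only the atlas axioms remain.

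For the covering axiom, I would split $M\times\R$ into $M\times\R^*$ and $M\times\{0\}$. Over $M\times\R^*$ the map $\tilde s_i=s_i\circ q_i$ has image containing $s_i(U_i)\times\R^*$, and these cover $M\times\R^*$ because $\cU$ is an atlas. Over $M\times\{0\}$ the image of $\tilde s_i$ is $s_i(V_i)\times\{0\}$, and by hypothesis $\bigcup_i s_i(V_i)=M$. For the inverse axiom, observe that the inverse of $(D(U_i,V_i),\tilde t_i,\tilde s_i)$ is canonically $D(U_i^{-1},V_i)$ (as $V_i$ is an identity bisection for $U_i^{-1}$ as well, and $q_i$ is intrinsic). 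Since $U_i^{-1}$ is adapted to $\cU$, at any point I get a local morphism $f:U_i^{-1}\to U_k$, which \cite[Prop. 2.5]{AndrSkPsdo} allows me to arrange so that $f(V_i)\subset V_k$; Proposition \ref{bisubmD}(2) then promotes $f$ to a morphism $D(U_i^{-1},V_i)\to D(U_k,V_k)$. Away from $V_i$ it even suffices to use the open embedding $U_k\times\R^*\hookrightarrow D(U_k,V_k)$.

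For the composition axiom, I would first establish the natural identification
\[
D(U_i,V_i)\circ D(U_j,V_j)\ \cong\ D(U_i\circ U_j,\ V_i\circ V_j),
\]
where $V_i\circ V_j=V_i\times_M V_j$ is the evident identity bisection of $U_i\circ U_j$. This can be checked in the local coordinates used in the proof of Proposition \ref{bisubmD}: on both sides the underlying set is $\{(v',\alpha',\alpha,\beta):(v',\beta\alpha')\in U_j,\ (t_j(v',\beta\alpha'),\beta\alpha)\in U_i\}$, and the source/target maps agree after reparametrisation. Granted this identification, since $\cU$ is an atlas, any point in $U_i\circ U_j$ has a neighborhood and a morphism to some $U_k$ which, by \cite[Prop. 2.5]{AndrSkPsdo}, can be taken to send $V_i\circ V_j$ into $V_k$; functoriality then produces the required morphism to $D(U_k,V_k)$.

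For part (2), recall that a path-holonomy bi-submersion for $(M,\cF)$ has the form $\Omega\subset V\times\R^k$ with $s(v,\alpha)=v$, $t(v,\alpha)=\exp(\textstyle\sum\alpha_i X_i)(v)$, and identity bisection $V\times\{0\}$, where $X_1,\dots,X_k$ locally generate $\cF$. A direct computation (as in the proof of Proposition \ref{bisubmD}(1)) yields
\[
D(\Omega,V\times\{0\})=\{(v,\alpha,\beta):(v,\beta\alpha)\in\Omega\},\quad \tilde t(v,\alpha,\beta)=(\exp(\textstyle\sum\beta\alpha_i X_i)(v),\beta),
\]
and this matches the path-holonomy bi-submersion for $\cT\cF$ built from the generators $\lambda(X_i\otimes 1)$, since the flow of $\sum\gamma_i\lambda(X_i\otimes 1)$ at $(v,\beta)$ is $(\exp(\sum\beta\gamma_i X_i)(v),\beta)$. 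Thus the atlas produced from the path-holonomy atlas of $\cF$ is precisely generated by path-holonomy bi-submersions of $\cT\cF$.

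The main obstacle I anticipate is establishing the composition identification cleanly; the deformation to the normal cone commutes with transverse fibered products only under a transversality of $V_i$ and $V_j$ inside $U_i\circ U_j$, and I would need to verify (either by the explicit coordinate description above or by the universal property) that the relevant submanifolds meet transversally so that the comparison map is a diffeomorphism of bi-submersions rather than merely a bijection.
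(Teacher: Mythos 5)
Your proof of part (1) is correct and follows essentially the same route as the paper's: the covering axiom is checked by splitting $M\times\R$ into $M\times\R^*$ and $M\times\{0\}$, and the inverse and composition axioms are reduced to the identifications $D(U,V)^{-1}\cong D(U^{-1},V)$ and $D(U_i,V_i)\circ D(U_j,V_j)\cong D(U_i\circ U_j,\,V_i\circ V_j)$ combined with Proposition~\ref{bisubmD}(b) and \cite[Prop.~2.5]{AndrSkPsdo}. The paper packages this slightly differently, first proving the stronger intermediate statement that for \emph{any} bi-submersion $(U,t,s)$ adapted to $\cU$ with closed identity bisection $V$, the deformation $(D(U,V),\tilde t,\tilde s)$ is adapted to $\widetilde\cU$, and then deducing both axioms from it and the two identifications; this is equivalent to your argument but slightly more economical.

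For part (2) you take a genuinely different, more computational route: you write down a path-holonomy bi-submersion for $(M,\cF)$ in the explicit form $\Omega\subset V\times\R^k$ with $t(v,\alpha)=\exp(\sum\alpha_iX_i)(v)$, compute $D(\Omega,V\times\{0\})$ in coordinates, and match it with the path-holonomy bi-submersion for $\cT\cF$ built from the generators $\lambda(X_i\otimes 1)$. The paper instead argues structurally: it invokes the characterization of the path-holonomy atlas as the one generated by bi-submersions with identity bisections $V_i$ satisfying $s_i(U_i)=t_i(U_i)=s_i(V_i)$ and having connected fibers, and notes that $D(U_i,V_i)$ inherits these properties. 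Your approach buys concreteness and self-containment; the paper's is terser but leans on the abstract characterization from \cite{AndrSk}. Both are valid.

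The transversality worry you raise at the end is sensible to flag but is not an actual obstacle here: since $s_i|_{V_i}$ and $t_j|_{V_j}$ are \'etale maps to $M$ they are automatically transverse, $\tilde s_i$ and $\tilde t_j$ are submersions by Proposition~\ref{bisubmD}(a) so the fibered product of the deformed bi-submersions exists, and the coordinate computation you outline confirms that the natural comparison map $D(U_i\circ U_j,V_i\circ V_j)\to D(U_i,V_i)\circ D(U_j,V_j)$ is a diffeomorphism commuting with the source and target maps. The paper simply asserts this identification (``one easily identifies\dots''), so your caution is well placed even though it is resolvable.
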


\begin{proof}
\begin{enumerate}
\item Since $\tilde s(V_i\times \R)=V_i\times \R$, it follows that $\bigcup _{i\in I}\tilde s_i(D(U_i,V_i))=M\times \R$.

Let $(U,t,s)$ be a bi-submersion adapted to $\cU$ and $V\subset U$ be a closed identity bisection. It follows from prop. \ref{bisubmD}.b) that $(D(U,V),\tilde t,\tilde s)$ is adapted to $\widetilde\cU$. 

It follows that the inverse $(D(U,V),\tilde s,\tilde t)$ of $(D(U,V),\tilde t,\tilde s)$ is adapted to $\tilde \cU$ since $(U,s,t)$ is adapted to $\cU$.

If $(U',t',s')$ is another bi-submersion adapted to $\cU$ and $V'\subset U'$ is a closed identity bisection, one easily identifies the composition $(D(U,V),\tilde t,\tilde s)\circ (D(U',V'),\tilde t',\tilde s')$ with the bi-submersion $(D(U\circ U',V\circ V',(t\times \id)\circ q,(s\times \id)\circ q)$.

\item From the above arguments, it follows that if $\cU$ is generated by a subfamily $(U_i)_{i\in J}$ such that $\bigcup _{i\in J}s_i(V_i)=M$, then $\tilde \cU$ is generated by $(D(U_i,V_i))_{i\in J}$. Now, the path holonomy atlas is generated by a family $(U_i,s_i,t_i)$ of bi-submersions with identity bisections $V_i$ such that $s_i(U_i)=t_i(U_i)=s_i(V_i)$ and with connected fibers. Then $(D(U_i,V_i),\tilde t_i,\tilde s_i)$ satisfies the same properties. It generates the path holonomy atlas for $(M\times \R,\widetilde\cF)$. 
\qedhere
\end{enumerate}
\end{proof}

\begin{proposition}\label{tanfol}
Let $\cU$ be an atlas for $(M,\cF)$ and $\tilde \cU$ the corresponding atlas for $(M\times \R,\cT\cF)$. The groupoid of the atlas $\tilde \cU$ naturally identifies with $\bigcup_{x\in M}\cF_x\times \{0\}\cup \cG(\cU)\times \R^*$.

\begin{proof} 
Since the equivalence relation defining $\cG(\tilde \cU)$ respects the source and target maps, we find by composition  with the second projection a well defined map $\tau:\cG(\tilde \cU)\to \R$. Therefore $\cG(\tilde \cU)$ is the union  $\tau^{-1}(\R^*) \cup \tau^{-1}(\{0\})$. We will now identify $\tau^{-1}(\R^*)$ with $\cG(\cU)\times \R^*$ and $\tau^{-1}(\{0\})$ with $\bigcup_{x\in M}\cF_x$. 

\begin{enumerate}
\item Let $(W,t,s)$ be a bi-submersion adapted to $\tilde \cU$. For $\beta \in\R$, put $W_\beta=s^{-1}(M\times \{\beta\})$. For $\beta \ne 0$, by restriction of $t,s$ to $W_\beta$ we get a bi-submersion $(W_\beta,t_\beta,s_\beta)$ adapted to $\cU$. Also if $(W',t',s')$ is adapted to $(W,t,s)$ the restriction $(W'_\beta,t'_\beta,s'_\beta)$ is adapted to $(W_\beta,t_\beta,s_\beta)$.
We have constructed a map $P_{\R^*}:\tau^{-1}(\R^*)\to \cG(\cU)\times \R^*$.

Let $(U,t,s)$ be a bi-submersion adapted to the atlas $\cU$. Putting $V=\emptyset$, we find a bi-submersion $(U\times \R^*,t\times \id_{\R^*},s\times \id_{\R^*})$ adapted to $\tilde \cU$.  Also if $(U',t',s')$ is adapted to $(U,t,s)$ then $(U'\times \R^*,t'\times \id_{\R^*},s'\times \id_{\R^*})$ is adapted to $(U\times \R^*,t\times \id_{\R^*},s\times \id_{\R^*})$. This way we construct a map $\cG(\cU)\times \R^*\to \tau^{-1}(\R^*)$, which is easily seen to be inverse to $P_{\R^*}$.

\item 
Let also  $V\subset U$ be an identity bisection. Assuming that $s$ is injective on $V$, we identify $V$ with its image in $M$ which is an open subset of $M$. Consider the map $dt-ds$ which to a vector field $\xi\in C_c^\infty(V;TU)$ associates the vector field $dt(\xi)-ds(\xi)\in C_c^\infty(V;TM)$. By definition of a bi-submersion, its range lies in $\cF$. Note that since $ds $ and $dt$ coincide for vectors along $V$, $(dt-ds)(\xi)$ only depends on the normal part of $\xi$, \ie its image in $C_c^\infty (V;NV)=C_c^\infty (V;TU/TV)$; we get in this way a map $\Phi:C_c^\infty(V;NV)\to \cF$ which is $C^\infty(M)$ linear - \ie a module map. At each point of $V$, we get a map between the fibers $q_x^V:N_xV\to \cF_x$. Again, if $(U',t',s')$ is another bi-submersion carrying the identity at $x$ and $V'$ is an identity bisection through $x$, then we have a morphism $j_x:N_xV'\to N_xV$ and it is easily seen that $q_x^{V'}=q_x^V\circ j_x$, so that we constructed a map $\cG(\tilde U)_{(x,0)}\to \cF_x$. 

We have constructed a map $P_0:\tau^{-1}(\{0\})\to \bigcup_{x\in M}\cF_x$.

The image of the map $\Phi$ is (again by definition of bi-submersions) the space  $C_c^\infty(V). \cF$ of elements of $\cF$ with support in $V$. It follows that $P_0$ is onto.

Now, if $U$ is minimal at $x$, then $q_x$ is injective, and it follows that $P_0$ is injective.\qedhere
\end{enumerate}
\end{proof}
\end{proposition}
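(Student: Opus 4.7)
The plan is to exploit the fact that the second coordinate of $M \times \R$ is preserved by the source and target maps of any bi-submersion of $\cT\cF$: composing $s$ with the projection to $\R$ yields a map $\tau:\cG(\tilde\cU)\to\R$ that is well-defined on equivalence classes, since morphisms of bi-submersions intertwine sources. I would then split $\cG(\tilde\cU) = \tau^{-1}(\R^*)\sqcup \tau^{-1}(\{0\})$ and identify each piece separately, using the deformation-to-the-normal-cone bi-submersions $D(U,V)$ from Proposition \ref{bisubmD} as a convenient generating family for $\tilde\cU$ (Proposition \ref{propatlas}).

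For $\tau^{-1}(\R^*)$, the point is that away from $\beta=0$ the adiabatic foliation coincides with the product foliation $\cF\otimes 1$ on $M\times\R^*$. First, given any bi-submersion $(W,t,s)$ adapted to $\tilde\cU$, restricting to $W_\beta=s^{-1}(M\times\{\beta\})$ for $\beta\neq 0$ yields a bi-submersion of $(M,\cF)$, giving a map $P_{\R^*}:\tau^{-1}(\R^*)\to\cG(\cU)\times\R^*$; compatibility with morphisms is immediate from functoriality of the restriction. Conversely, for any bi-submersion $(U,t,s)$ of $\cF$, the product $(U\times\R^*,t\times\id,s\times\id)$ is adapted to $\tilde\cU$ (it is $D(U,\emptyset)$ restricted to $\beta\neq 0$), producing an inverse map.

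The content of the proposition lies in $\tau^{-1}(\{0\})$. Here I would use the bi-submersions $D(U,V)$ for $(U,t,s)$ minimal at a point with identity bisection $V$ through $x$. By construction, the fiber of $D(U,V)$ over $(x,0)$ is the normal space $N_xV = T_xU/T_xV$. Since $s$ and $t$ agree on $V$, their differentials agree on $T_xV$, so $dt-ds$ descends to a linear map $q_x^V:N_xV\to T_xM$; combined with the bi-submersion axiom $s^{-1}(\cF)=\ker ds + \ker dt$, its image lands in $F_x$, and in fact in the fiber $\cF_x$ (one needs to check this at the module level: the map $\Phi: C_c^\infty(V;NV)\to \cF$ sending a normal section to $(dt-ds)(\xi)$ is $C^\infty(V)$-linear, hence induces $q_x^V$ on fibers). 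Compatibility with morphisms of bi-submersions comes from functoriality of the normal bundle construction, so we obtain a well-defined map $P_0:\tau^{-1}(\{0\})\to \bigcup_{x\in M}\cF_x$.

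The main obstacle will be proving $P_0$ is a bijection. For surjectivity, I would argue that by condition (ii) in the definition of bi-submersion, the image of $\Phi$ exhausts $C_c^\infty(V)\cdot \cF$, so varying $V$ and taking local sections covers all of $\cF_x$. For injectivity, I would use minimality: when $\dim U = \dim M + \dim\cF_x$, then $\dim N_xV = \dim\cF_x$, and surjectivity of $q_x^V$ forces it to be an isomorphism. Since every bi-submersion admits, locally near any point, a submersion-morphism to a minimal one, the equivalence relation defining $\cG(\tilde\cU)$ at $(x,0)$ collapses exactly the kernel of $dt-ds$, making $P_0$ injective.
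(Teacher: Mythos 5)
Your argument follows the paper's own proof in essentially every detail: the projection $\tau$ to $\R$, the fiberwise restriction $W_\beta$ together with the converse product $U\times\R^*$ for the $\beta\neq 0$ part, and the map $\Phi$ (hence $q_x^V$) induced by $dt-ds$ on the normal bundle of an identity bisection for the $\beta=0$ part, with surjectivity from the bi-submersion axiom and injectivity from minimality. Your dimension-count justification for why $q_x^V$ is an isomorphism at a minimal point is a correct unpacking of the paper's terser assertion that $q_x$ is injective.
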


\subsection{The short exact sequence}

\emph{As explained above (page \pageref{section1.6}), from now on we fix an atlas $\cU$ for $(M,\cF)$ and the corresponding atlas $\tilde \cU$ for $(M\times \R,\cT\cF)$. All bi-submersions considered here are assumed to be adapted to this atlas. Also what we call $C^*(M,\cF)$ and $C^*(M\times \R,\cT\cF)$ are in fact the (full) $C^*$-algebras associated with these atlases.}

Here we construct a short exact sequence of $C^*$-algebras 
\begin{eqnarray}\label{extn1}
0 \to C_{0}(\R^*) \otimes C^{*}(M,\cF) \ {\buildrel j\over\longrightarrow}\  C^{*}(M \times \R,\cT \cF) \ {\buildrel \pi\over\longrightarrow}\ C_{0}(\cF^{*}) \to 0
\end{eqnarray}

We first identify $C_{0}(\R^*) \otimes C^{*}(M,\cF)$ with an ideal in $C^{*}(M \times \R,\cT \cF)$, then construct the homomorphism $\pi$, show that it is onto, and finally identify the kernel of $\pi$ with the image of $C_{0}(\R^*) \otimes C^{*}(M,\cF)$.

\subsubsection{Construction of $j$}
The $C^*$-algebra of the restriction of $\cT \cF$ to $M\times \R^*$ is an ideal $J$ in $C^*(M\times \R;\cT\cF)$. Now, the restriction of $\cT \cF$ to $M\times \R^*$ coincides with $\cF\otimes 1$. Evaluation at each point $\beta$ of $\R^*$, gives a map: $\ev_\beta:J\to C^*(M,\cF)$. By density of $C_c^\infty (U)\otimes C_c^\infty(\R^*)$ in $C_c^\infty (U\times \R^*)$ with respect to the $L^1$-estimate (\cite[\S4.4]{AndrSk}), it follows that for every $x\in J$, the map $\beta\mapsto \ev_\beta(x)$ is continuous. In this way, we constructed a $*$-homomorphism $J\to C_{0}(\R^*) \otimes C^{*}(M,\cF)$. Using again functions in $C_c^\infty (U)\otimes C_c^\infty(\R^*)$, it follows that this map is onto. 

To show that it is injective, we have to show that every irreducible representation $\theta$ of $J$ factors through $C_{0}(\R^*) \otimes C^{*}(M,\cF)$. But representations of $J=C^*(M\times \R^*;\cF\otimes 1)$ were described in \cite[\S5]{AndrSk} and in particular they give rise to a measure on $M\times \R^*$.
Denote by $\overline \theta$ the extension of $\theta $ to the multipliers. Since $C_0(\R^*)$ lives in the center of the multipliers of $J$, $\overline\theta(C_0(\R^*))$ lies in the center of the bi-commutant of $\theta$, and is therefore scalar valued. In other words, $\overline \theta$ is a character of $C_0(\R^*)$. It follows that there exists $\beta\in \R$ such that this measure is carried by $M\times \{\beta\}$. The representation $\theta$ is really a representation of the groupoid $\cG(\cU)\times \R^*$, and since the corresponding measure is carried by $M \times\{\beta\}$ it is in fact a representation of the groupoid $\cG(\cU)\times \{\beta\}$. It follows, that $\theta$ is of the form $\theta'\circ\ev_\beta$.

\subsubsection{Construction of $\pi$.}

Let $(U,t,s)$ be a bi-submersion for $(M,\cF)$ and $V$ an identity bisection. Put $\widetilde{U} = D(U,V)$. We define a map $\varpi_{(U,V)} : C_{c}^{\infty}(\widetilde{U};\Omega^{1/2}\widetilde{U}) \to C_{0}(\cF^{*})$ as follows: Given $f \in C_{c}^{\infty}(\widetilde{U};\Omega^{1/2}\widetilde{U})$,
\begin{itemize}
 \item first restrict it to $f_{0} \in C^{\infty}_{c}(NV \times \{0\}; \Omega^{1}NV)$;
 \item then apply the Fourier transform to obtain $\widehat{f}_{0} \in C_{0}(N^{*}V)$;
 \item since $\cF^*_V=\{(x,\xi);\ x\in s(V),\ \xi\in \cF^*_x\}$ identifies with a closed subspace of $N^{*}V$, consider the restriction to this set and extend it by $0$ outside $\cF_V^*$ to get an element $\varpi_{(U,V)}(f)=\widehat{f}_{0}\mid_{\cF^{*}} \in C_{0}(\cF^{*})$.
\end{itemize}

We next show that $\pi$ is a well defined and surjective homomorphism.

To show that $\pi $ is a well defined homomorphism, we just need to show that for every $x\in M$ and $\xi\in \cF^*_x$ there is a well defined character $\hat \chi_{(x,\xi)}$ of $C^*(M\times \R;\cT\cF)$ such that the image of the class of $f\in C_{c}^{\infty}(\widetilde{U};\Omega^{1/2}\widetilde{U})$ is $\varpi_{(U,V)}(f)(x,\xi)$. Now, $\xi$ defines a one dimensional representation of the groupoid $\cG(\widetilde{\cU})$ in the sense of \cite[\S5]{AndrSk}: the corresponding measure is the Dirac measure $\delta_{(x,0)}$ on $ M\times \R$, the Hilbert space is just $\C$, and $\chi_\xi(x,X)=e^{-i\langle X|\xi\rangle}$ for $X\in \cF_x$ (the rest of the groupoid being of measure $0$, the value of $\chi_\xi$ on an element which is not of the form $(x,X)$ doesn't matter). It is now an elementary calculation to see that the image of $f$ under the character $\hat \chi_{(x,\xi)}$ of $C^*(M\times\R;\cT\cF)$ corresponding to $(\delta_{(x,0)},\C,\chi_\xi)$ is $\varpi_{(U,V)}(f)(x,\xi)$.

To show that $\pi$ is onto, first note that the map $f\mapsto f_0$ is surjective from $C_{c}^{\infty}(\widetilde{U};\Omega^{1/2}\widetilde{U})$ to $C^{\infty}_{c}(NV \times \{0\}; \Omega^{1}NV)$, that the Fourier transform has then dense range in $C_0(N^*V)$ hence the closure of the image by $\varpi_{(U,V)}$ of $C_{c}^{\infty}(\widetilde{U};\Omega^{1/2}\widetilde{U})$ is the set of functions on $\cF^*$ which vanish outside the open set $\cF_V$. Since the $s(V_i)$ form an open cover of $M$, these sets form an open cover of $\cF^*$. It follows that $\pi$ is surjective.

\subsubsection{Exactness}

We now come to the main result of this section:

\begin{theorem}
The sequence {\rm(\ref{extn1})}, namely\begin{eqnarray*}
0 \to C_{0}(\R^*) \otimes C^{*}(M,\cF) \ {\buildrel j\over\longrightarrow}\  C^{*}(M \times \R,\cT \cF) \ {\buildrel \pi\over\longrightarrow}\ C_{0}(\cF^{*}) \to 0
\end{eqnarray*}
is exact
\begin{proof} 
We already showed that $j$ is injective and $\pi $  is surjective. One sees also easily that $\pi\circ j=0$.

Put $A = C^{*}(M \times \R, \cT\cF)$ and $J=j(C_{0}(\R^*) \otimes C^{*}(M,\cF))$.

Let $\tilde U=D(U,V)$ be a bi-submersion and $f\in C_{c}^{\infty}(\widetilde{U};\Omega^{1/2}\widetilde{U})$; if $f$ vanishes in a neighborhood of $NV\times \{0\}$, its image in $A$ lies in $J$; the same holds if $f$ just vanishes on $NV\times \{0\}$ thanks to the $L^1$ estimate (\cite[\S4.4]{AndrSk}). Indeed, $f$ can then be approximated uniformly with fixed support by a sequence of elements which vanish near $NV$.

It suffices to show that every irreducible representation of $A$ that vanishes on $J$ vanishes on $\ker \pi$. So we'll just show that every such irreducible representation $\theta$ is actually a point of $\cF^{*}$. Extending $\theta $ to the multipliers, we find a representation $\overline \theta$ of $C_0(M\times \R)$.

Take $f \in C_c^{\infty}(M\times \R)$ and $g \in C^{\infty}_{c}(\widetilde{U})$, and put $h = (f\circ \widetilde{t})g - g(f \circ \widetilde{s}) \in C^{\infty}_{c}(\widetilde{U})$ which vanishes on $NV \times \{0\}$. Therefore $\overline \theta(f)\theta(g) - \theta(g)\overline \theta(f)=\theta (h)=0$. It follows that $\overline \theta(C_0(M\times \R))$ lies in the commutant $\C1$ of $\theta$ and thus $\overline \theta$ is a character, \ie a point $(x,\beta) \in M\times \R$. Now if $f$ vanishes in a neighborhood of $M\times \{0\}$, then $fA\subset J$. It follows that $\beta =0$.

 By \cite[\S 5]{AndrSk} $\theta$ is the integrated form of a representation $(\mu,\cH,\chi)$ of the groupoid $\cG(\tilde \cU)$. We just showed that the measure $\mu$ is a Dirac measure $\delta_{(x,0)}$, and it follows that $\chi $ is just a representation of $\cF_x$ on the Hilbert space $\cH$, whence a direct integral of characters $\chi_\xi$. Therefore, $\theta$ is itself a direct integral of characters $\hat\chi_{(x,\xi)}$. Since it is irreducible it coincides with a character $\hat\chi_{(x,\xi)}$.
 \end{proof}
\end{theorem}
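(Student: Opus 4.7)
The plan is to establish the three exactness statements, two of which are already in hand: injectivity of $j$ has been shown via a representation-theoretic argument, and surjectivity of $\pi$ followed from the density of the Fourier transform on $C_0(N^*V)$. Moreover $\pi \circ j = 0$ because $J$ is generated by elements supported in $M \times \R^*$, pulled back from the bi-submersions $U \times \R^*$, which have no $NV \times \{0\}$-component to feed into $\varpi_{(U,V)}$. The real content is thus the inclusion $\ker \pi \subseteq j(C_0(\R^*) \otimes C^*(M,\cF))$. I would dualize: since $A = C^*(M\times \R, \cT\cF)$ is the primitive-ideal intersection of its irreducible representations, it suffices to show that every irreducible representation $\theta$ of $A$ that annihilates $J := j(C_0(\R^*)\otimes C^*(M,\cF))$ also annihilates $\ker \pi$, or equivalently, that such $\theta$ is a character $\hat\chi_{(x,\xi)}$ of the form constructed in the definition of $\pi$.

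The preliminary technical point I would isolate is the following approximation lemma, relying on the $L^1$-estimate from \cite[\S4.4]{AndrSk}: any $f \in C_c^\infty(\tilde U; \Omega^{1/2}\tilde U)$ vanishing on $NV \times \{0\}$ maps into $J$, since it is approximable with fixed support by elements vanishing in a full neighborhood of $NV \times \{0\}$, which are plainly supported in $M \times \R^*$. With this in hand, I would extend $\theta$ to a representation $\bar\theta$ of the multiplier algebra, which contains $C_0(M \times \R)$ via pullback along source/target. For $f \in C_c^\infty(M \times \R)$ and $g \in C^\infty_c(\tilde U)$, the commutator $h := (f \circ \tilde t) g - g (f \circ \tilde s)$ vanishes on $V \times \{0\}$ because $\tilde s = \tilde t$ there, so it vanishes on $NV \times \{0\}$ and hence maps into $J$. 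Therefore $[\bar\theta(f), \theta(g)] = \theta(h) = 0$; since such $\theta(g)$ generate $\theta(A)$, irreducibility forces $\bar\theta(C_0(M\times \R)) \subseteq \C 1$, so $\bar\theta|_{C_0(M\times \R)}$ is evaluation at a point $(x,\beta)\in M \times \R$.

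Next I would show $\beta = 0$: if $f \in C_0(M\times\R)$ vanishes in a neighborhood of $M \times \{0\}$, then $f A \subseteq J$ (because away from $M \times \{0\}$ the foliation $\cT\cF$ is just $\cF \otimes 1$, and multiplication by such an $f$ localizes to $M \times \R^*$). Since $\theta|_J = 0$, the operator $\bar\theta(f)\theta(a) = 0$ for all $a\in A$, whence $\bar\theta(f) = 0$; this rules out $\beta \neq 0$. Now apply the disintegration theorem (\cite[\S5]{AndrSk}): $\theta$ is the integrated form of a groupoid representation $(\mu,\cH,\chi)$ of $\cG(\tilde\cU)$, and the preceding step forces $\mu = \delta_{(x,0)}$. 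By Proposition \ref{tanfol}, the isotropy of $\cG(\tilde\cU)$ at $(x,0)$ is the abelian vector group $\cF_x$, so $\chi$ is a unitary representation of $\cF_x$ on $\cH$. Its irreducibility forces $\cH = \C$ and $\chi = \chi_\xi$ for a unique $\xi \in \cF_x^*$, so $\theta = \hat\chi_{(x,\xi)}$ as required.

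The step I expect to be most delicate is the commutator-plus-approximation argument showing $\bar\theta(C_0(M\times \R))\subseteq \C 1$: it hinges on the $L^1$-approximation lemma (which itself rests on \cite[\S4.4]{AndrSk}), and on being careful that the commutator $h$ really vanishes at the smooth level on $NV\times \{0\}$ rather than only transversally. Once that is secured the rest is essentially an invocation of Proposition \ref{tanfol} and the disintegration theorem.
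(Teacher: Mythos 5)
Your proposal is correct and follows essentially the same route as the paper: the $L^1$-approximation lemma showing $f\mid_{NV\times\{0\}}=0$ implies membership in $J$, the multiplier-commutator argument forcing $\overline\theta\mid_{C_0(M\times\R)}$ to be a character $(x,\beta)$, the observation that $fA\subset J$ for $f$ vanishing near $M\times\{0\}$ to pin down $\beta=0$, and finally the disintegration theorem together with Proposition~\ref{tanfol} to reduce to a representation of the abelian isotropy group $\cF_x$, hence a character $\hat\chi_{(x,\xi)}$. The only cosmetic slip is the phrase ``vanishes on $V\times\{0\}$'': the fibre at $0$ of $D(U,V)$ is $NV$, not $V$, but since $\tilde s=\tilde t$ on all of $NV\times\{0\}$ (both compose with the bundle projection $NV\to V$), the conclusion that $h$ vanishes on $NV\times\{0\}$ is exactly what you and the paper both use.
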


\begin{remark}\label{evt}
Since $C^*(M\times\R;\cT\cF)$ is a $C_0(\R)$ algebra it restricts to any locally closed subset of $\R$. If $Y=T_1\setminus T_2$ where $T_2\subset T_1$ are open sets of $\R$, one puts $C^*(M\times\R;\cT\cF)_Y=C_0(T_1)C^*(M\times\R;\cT\cF)/C_0(T_2)C^*(M\times\R;\cT\cF)$.

Restricting extension (\ref{extn1}) to $[0,1]$, we get an exact sequence:\begin{eqnarray}\label{extn2}
0 \to C_{0}((0,1]) \otimes C^{*}(M,\cF) \to C^{*}(M \times \R,\cT \cF)_{[0,1]} \ {\buildrel \ev_0\over\longrightarrow}\ C_{0}(\cF^{*}) \to 0
\end{eqnarray}
\end{remark}

\section{The analytic index via the tangent groupoid}

The tangent groupoid exact sequence (\ref{extn2}) gives rise to an element in $ KK(C_0(\cF^*),C^*(M,\cF))$ and we will show that this element coincides with the analytic index element.

Indeed, since $C_0(\cF^*)$ is abelian, the exact sequence (\ref{extn2}) is semi-split. Moreover the kernel of the homomorphism  $\ev_0$  is the contractible $C^*$-algebra $C_{0}((0,1]) \otimes C^{*}(M,\cF)$, so that the element $[\ev_0]\in KK(C^{*}(M \times \R,\cT \cF)_{[0,1]},C_0(\cF^*))$ is invertible. 

Evaluation at $1$ is a morphism $\ev_1:C^{*}(M \times \R,\cT \cF)_{[0,1]}\to C^*(M,\cF)$.

The main result in this section is:

\begin{theorem}
We have the equality $\ind_a=[\ev_0]^{-1}\otimes [\ev_1]\in KK(C_0(\cF^*),C^{*}(M,\cF))$.
\end{theorem}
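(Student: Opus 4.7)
The plan is to run the mapping cone construction of Section 2 in the adiabatic family over $t \in [0,1]$. First, apply the construction of \cite{AndrSkPsdo} to the adiabatic foliation to obtain a $0$-order PDO extension
$$0 \to A_{[0,1]} \to \Psi_{[0,1]} \xrightarrow{\tilde\sigma} B_{[0,1]} \to 0,$$
where $A_{[0,1]} = C^*(M \times \R, \cT\cF)_{[0,1]}$; since $\cT\cF^* = \cF^* \times \R$, $B_{[0,1]}$ is a quotient of $C(S\cF^* \times [0,1])$. The multiplication $\tilde m : C(M \times [0,1]) \to \Psi_{[0,1]}$ and the pull-back $\tilde p : C(M \times [0,1]) \to B_{[0,1]}$ fit into a commutative square, yielding (as in Section 2) a morphism of mapping cones $\tilde\varphi : \cC_{\tilde p} \to \cC_{\tilde\sigma}$; applying Proposition \ref{cone} fiberwise in $t$ identifies $\cC_{\tilde p}$ with $C_0(\cF^* \times [0,1])$. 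Since $B_{[0,1]}$ is abelian, the excision $\tilde e : A_{[0,1]} \to \cC_{\tilde\sigma}$ is a $KK$-equivalence. Set
$$\widetilde{\ind_a} := [\tilde\varphi] \otimes_{\cC_{\tilde\sigma}} [\tilde e]^{-1} \in KK(C_0(\cF^* \times [0,1]), A_{[0,1]}).$$

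Next, examine how $\widetilde{\ind_a}$ behaves under the evaluations at $t = 0$ and $t = 1$. Evaluation at $t = 1$ maps the entire adiabatic commutative square (and PDO extension) onto its counterpart (\ref{diag2}) for $(M, \cF)$; by naturality of the mapping cone construction,
$$\widetilde{\ind_a} \otimes [\ev_1] = [\ev_1^{\cF^*}] \otimes \ind_a,$$
with $\ev_1^{\cF^*} : C_0(\cF^* \times [0,1]) \to C_0(\cF^*)$ the evaluation at $1$. For $t = 0$, Proposition \ref{tanfol} identifies the restriction of $\cG(\tilde\cU)$ with the bundle of abelian Lie groups $\bigcup_x \cF_x$, whose $C^*$-algebra is $C_0(\cF^*)$; the adiabatic PDO extension specializes to the ``trivial'' radial-compactification extension
$$0 \to C_0(\cF^*) \to C(\overline{\cF^*}) \to C(S\cF^*) \to 0.$$
Naturality again yields $\widetilde{\ind_a} \otimes [\ev_0] = [\ev_0^{\cF^*}] \otimes \ind_a^{(0)}$, where $\ind_a^{(0)}$ is the analytic index of this commutative extension, defined by the same mapping cone recipe.

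The crux is the identity $\ind_a^{(0)} = \id_{C_0(\cF^*)}$: under the canonical identifications $\cC_p \cong C_0(\cF^*) \cong \cC_{\sigma^{(0)}}$ of Proposition \ref{cone}, the induced morphism $\varphi^{(0)}$ and the excision $e^{(0)}$ are distinct but both proper homotopy equivalences, and a radial (norm-rescaling) homotopy of $\overline{\cF^*}$ joins them; equivalently, through Section 2.4, $\ind_a^{(0)}$ sends the class $[(E^+, E^-, u)]$ of a symbol to the $K$-theory class of the corresponding Fourier multiplier on $\bigcup_x \cF_x$, which coincides with the symbol class. Granting this, and observing that the evaluations $\ev_0^{\cF^*}$ and $\ev_1^{\cF^*}$ are homotopic $*$-homomorphisms --- both $KK$-inverse to the inclusion $\iota : f \mapsto f \otimes 1_{C[0,1]}$ --- we deduce $\widetilde{\ind_a} = [\ev_0^{\cF^*}] \otimes [\ev_0]^{-1}$ and hence
$$[\ev_1^{\cF^*}] \otimes \ind_a \;=\; \widetilde{\ind_a} \otimes [\ev_1] \;=\; [\ev_0^{\cF^*}] \otimes [\ev_0]^{-1} \otimes [\ev_1].$$
Composing on the left with $[\iota]$ cancels the common factor $[\ev_1^{\cF^*}] = [\ev_0^{\cF^*}] = [\iota]^{-1}$ and gives the desired equality $\ind_a = [\ev_0]^{-1} \otimes [\ev_1]$. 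The main obstacle is the explicit verification that $\ind_a^{(0)} = \id$: the maps $\varphi^{(0)}$ and $e^{(0)}$ into $\cC_{\sigma^{(0)}}$ are manifestly different at the algebra level, and one must produce a controlled $KK$-homotopy between them, or equivalently a direct $K$-theoretic computation tracing through the relative $K$-theory picture of Section 2.4.
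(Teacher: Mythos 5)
Your proposal takes essentially the same route as the paper: build the adiabatic $0$-order extension over $[0,1]$, form a mapping-cone class $\widetilde{\ind_a}$, and show that evaluation at $1$ recovers $\ind_a$ while evaluation at $0$ gives the identity. The paper also isolates exactly these two facts (Claim 1 and Claim 2). The only real divergence is cosmetic: you put $C(M\times[0,1])\to C(S\cF^*\times[0,1])$ on top of the commutative square, so your $\widetilde{\ind_a}$ lives in $KK(C_0(\cF^*\times[0,1]),A_{[0,1]})$ and must be corrected by $[\iota]$ at the end. The paper instead uses the ``constant'' morphisms $\tilde m : C(M)\to \Psi_{[0,1]}$ and $\tilde q : C(S\cF^*)\to B_{[0,1]}$, keeping $p:C(M)\to C(S\cF^*)$ as the top row; then $\widetilde{\ind_a}\in KK(C_0(\cF^*),A_{[0,1]})$ already, $\ev_0^B\circ\tilde q=\id$, and the conclusion $\widetilde{\ind_a}=[\ev_0]^{-1}$ drops out without the extra $\iota$-cancellation. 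Your bookkeeping is correct but the paper's choice is tidier.

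Two points in your proof deserve a little more attention, both of which the paper does address. First, the assertion that the $t=0$ fiber of the adiabatic $\Psi$-algebra is $C(\overline{\cF^*})$ is not automatic: the paper obtains it by a computation with \emph{minimal} bi-submersions, showing $\hat\chi_{(x,\xi)}(P)=a(x,\xi,0)$ for $P$ a pseudodifferential distribution on $D(U,V)$ with identity bisection $V\times\R$, and concluding that $\Psi(M\times\R;\cT\cF)_0$ is the closure of the order-zero symbols. You should not merely announce that the extension ``specializes to'' the radial-compactification sequence. Second, you flag as the ``main obstacle'' the identity $\ind_a^{(0)}=\id_{C_0(\cF^*)}$; this is indeed the content of Claim 2, and the paper disposes of it by identifying $\cC_{p_0}$ with $C_0(\cF^*)$ and asserting that both the excision $e_0$ and the cone morphism $\psi$ become homotopic to the identity under that identification. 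The paper's treatment is about as terse as yours, so this is not a gap relative to the source; a radial-rescaling homotopy, as you suggest, is what is intended.
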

\begin{proof}
The restriction to $[0,1]$ of the exact sequence of pseudodifferential operators on $\cT\cF$ is written as follows: $$0\to C^*(M\times \R;\cT\cF)_{[0,1]}\to \Psi(M\times \R;\cT\cF)_{[0,1]}\to B_{[0,1]}\to 0.$$ Here $B_{[0,1]}$ is a quotient of $C_0(S^*\cF\times [0,1])$.

Extending functions on $S\cF^*$ to $S\cF^*\times [0,1]$ (by taking them independent on the variable in $[0,1]$) we get a morphism $C(S\cF^*)\to B_{[0,1]}$. Also considering multiplication by functions on $M$ as pseudodifferential elements, we get a diagram 

\begin{align}\label{diag6}
\xymatrix{
&& C(M)\ar[d]_{\tilde m}\ar[r]^{ p} & C(S\cF^*)\ar[d]^{\tilde q} \\
0 \ar[r] & C^{*}(M \times\R,\cT\cF)_{[0,1]}  \ar[r] & \Psi(M\times \R;\cT\cF)_{[0,1]}  \ar[r]^{\quad \qquad\widetilde{\sigma}} &  B_{[0,1]}   \ar[r] & 0 
}
\end{align}
from which we get a morphism $\tilde\varphi:C_0(\cF^*)\simeq\cC_p\to \cC_{\tilde \sigma}$ and a $KK$-element $$\widetilde{\ind_a}=[\tilde \varphi]\otimes_{\cC_{\tilde \sigma}}[\tilde e]^{-1}\in KK(C_0(\cF^*),C^{*}(M \times\R,\cT\cF)_{[0,1]})$$ where $\tilde e:C^{*}(M \times\R,\cT\cF)_{[0,1]}\to \cC_{\tilde \sigma}$ is the excision morphism.

The theorem is an immediate consequence of the following two facts:\begin{description}
\item[Claim 1.] $(\ev_1)_*(\widetilde{\ind_a})=\ind_a$
\item[Claim 2.] $\widetilde{\ind_a}=[\ev_0]^{-1}$.
\end{description}

\emph{Proof of Claim 1.}
Evaluation at $1$ gives the following diagram: $$\xymatrix{
0 \ar[r] & C^{*}(M \times\R,\cT\cF)_{[0,1]} \ar[d]^{\ev_{1}} \ar[r] & \Psi(M\times \R;\cT\cF)_{[0,1]} \ar[d]^{\ev_{1}^{\Psi}} \ar[r]^{\quad \qquad\widetilde{\sigma}} &  B_{[0,1]} \ar[d]^{\ev_{1}^{B}}   \ar[r] & 0 \\ 
0 \ar[r] & C^{*}(M,\cF) \ar[r] & \Psi(M,\cF) \ar[r]^{\qquad\sigma} & B_1 \ar[r] & 0
}$$
from which we get a commuting diagram:
\begin{align}\label{diag15}
\xymatrix{
 C^{*}(M \times\R,\cT\cF)_{[0,1]} \ar[d]_{\ev_{1}} \ar[r]^{\quad \qquad \tilde e} &\cC_{\tilde \sigma} \ar[d]^{\ev_{1}^{\cC}}   \\ 
C^{*}(M,\cF) \ar[r]^{e} & \cC_{\sigma} 
}
\end{align}
Moreover, $\ev_1^B\circ \tilde q=q$ and $\ev_1^\Psi\circ \tilde m=m$, whence $\ev_1^\cC\circ \tilde \varphi=\varphi$.

We thus have
\begin{eqnarray*}
(\ev_1)_*(\widetilde{\ind_a})&=&[\tilde \varphi]\otimes_{\cC_{\tilde \sigma}}[\tilde e]^{-1}\otimes [\ev_1]=[\tilde \varphi]\otimes_{\cC_{\tilde \sigma}}[\ev_1^\cC]\otimes_{\cC_\sigma}[e]^{-1}\\ &=&[\ev_1^\cC\circ \tilde \varphi]\otimes_{\cC_\sigma}[e]^{-1}=\ind_a
\end{eqnarray*}

\emph{Proof of Claim 2.} Evaluation at $0$ gives the following diagram: 
\begin{align}\label{diag8}
\xymatrix{
0 \ar[r] & C^{*}(M \times\R,\cT\cF)_{[0,1]} \ar[d]^{\ev_0} \ar[r] & \Psi(M\times \R;\cT\cF)_{[0,1]} \ar[d]^{\ev_0^{\Psi}} \ar[r]^{\quad \qquad\widetilde{\sigma}} &  B_{[0,1]} \ar[d]^{\ev_0^{B}}   \ar[r] & 0 \\ 
0 \ar[r] & C_{0}(\cF^{*}) \ar[r] & \Psi(M\times \R;\cT\cF)_0\ar[r]^{\ \qquad \sigma_{0}} & B_0 \ar[r] & 0}\end{align}
Let $x\in M$. Let $(U,t,s)$ be a bi-submersion for $(M,\cF)$ and $V$ an identity bisection such that $x\in s(V)$. Assume that $U$ is minimal at $x$. Then $V\times \R\subset D(U,V)$ is an identity bisection (for the foliation $(M\times \R,\cT\cF)$). 
Let $P\in \cP_c^0(D(U,V),V\times \R;\Omega^{1/2})$ be a pseudodifferential distribution with compact support (\cf \cite[\S 1.2.2]{AndrSkPsdo}). From the definition of the pseudodifferential family, it follows that for $\xi\in�\cF_x$,  we have $\hat \chi_{(x,\xi)}(P)=a(x,\xi,0)$ where $a$ is a symbol of $P$. It follows that the algebra $\Psi(M\times \R;\cT\cF)_0$ is the closure of order zero symbols, \ie the algebra $C(\overline {\cF^*})$ where $\overline {\cF^*}$ denotes the closure of $\cF^*$ by spheres at infinity (which is homeomorphic to the ``bundle'' of closed unit balls).

The bottom line in diagram (\ref{diag8}) is $$0\to C_0(\cF^*)\to C(\overline {\cF^*}){\buildrel p_0\over {\longrightarrow}} C(S^*\cF)\to 0.$$ 
Moreover $\ev_0^B\circ \tilde q$ is the identity of $C(S\cF^*)$. Therefore $(\ev_0)_*(\widetilde{\ind_a})=[e_0]^{-1}\otimes [\psi]$ where $e_0:C_0(\cF^*)\to \cC_{p_0}$ is the excision map and $\psi:\cC_{p}\to \cC_{p_0}$ is the morphism corresponding to the commuting diagram
$$
\xymatrix{
C(M) \ar[d] \ar[r]^{p} & C(S^*\cF) \ar@{=}[d] \\ 
C(\overline {\cF^*}) \ar[r]^{p_0} & C(S^*\cF)
}
$$
But one easily identifies $\cC_{p_0}$ with $C_0(\cF^*)$ in such a way that both $\psi$ and $e_0$ are homotopic to the identity. It follows that $(\ev_0)_*(\widetilde{\ind_a})=1_{C_0(\cF^*)}$.
\end{proof}

\end{document}